\newtheorem{theorem}{Theorem}
\newtheorem{definition}[theorem]{Definition}
\newtheorem{example}[theorem]{Example}
\newtheorem{remark}[theorem]{Remark}
\begin{document}
\title[Kronecker and Sidon sets]{The relationship between $\epsilon $%
-Kronecker and Sidon sets}
\author{Kathryn Hare}
\address{Dept. of Pure Mathematics\\
University of Waterloo\\
Waterloo, Ont.,~Canada\\
N2L 3G1}
\email{kehare@uwaterloo.ca}
\thanks{This research is supported in part by NSERC \#44597}
\author{L. Thomas Ramsey}
\address{Dept. of Mathematics\\
University of Hawaii\\
Honolulu, HI 96822}
\email{ramsey@math.hawaii.edu}
\thanks{}
\subjclass[2000]{Primary 43A46, 42A15; Secondary 42A55}
\keywords{Kronecker set, Sidon set}
\thanks{This paper is in final form and no version of it will be submitted
for publication elsewhere.}

\begin{abstract}
A subset $E$ of a discrete abelian group is called $\epsilon $-Kronecker if
all $E$-functions of modulus one can be approximated to within $\epsilon $
by characters. $E$ is called a Sidon set if all bounded $E$-functions can be
interpolated by the Fourier transform of measures on the dual group. As $%
\epsilon $-Kronecker sets with $\epsilon <2$ possess the same arithmetic
properties as Sidon sets, it is natural to ask if they are Sidon. We use the
Pisier net characterization of Sidonicity to prove this is true.
\end{abstract}

\maketitle

\section{Introduction}

A subset $E$ of the dual of a compact, abelian group $G$ is called an $%
\epsilon $\textit{-Kronecker set} if for every function $\phi $ mapping $E$
into the set of complex numbers of modulus one there exists $x\in G$ such
that 
\begin{equation*}
\left\vert \phi (\gamma )-\gamma (x)\right\vert <\epsilon \text{ for all }%
\gamma \in E\text{.}
\end{equation*}%
The infimum of such $\epsilon $ is called the \textit{Kronecker constant }of 
$E$ and is denoted $\kappa (E)$. Trivially, $\kappa (E)\leq 2$ for all sets $%
E$ and this is sharp if the identity of the dual group belongs to $E$. $%
\epsilon $-Kronecker sets have been studied for over 50 years since the
concept was introduced by Kahane in \cite{K} and the terminology was coined
by Varapoulos in \cite{Va}.  Examples of recent work include \cite{GH}, \cite%
{GK} (where they are called $\epsilon $-free), \cite{GH1}-\cite{HRThree} and 
\cite{KR}.

If $\kappa (E)<\sqrt{2}$, then $E$ is known to be an example of a Sidon set,
meaning every bounded $E$-function is the restriction to $E$ of the Fourier
transform of a measure on $G$. In fact, the interpolating measure can be
chosen to be discrete and $\sqrt{2}$ is sharp with this additional property.
Like $\epsilon $-Kronecker sets, Sidon sets have also been extensively
studied for many years; we refer the reader to \cite{HewR} or \cite{LR} for
an overview of what was known prior to the early 1970's and to \cite{GH13}
for more recent results. But many fundamental problems remain open,
including a full understanding of the connections between these two classes
of interpolation sets.

As sets with Kronecker constant less than $2$ possess many of the known
arithmetic properties satisfied by Sidon sets, it was asked in \cite{GH13}
whether all such sets are Sidon. Here we answer this question affirmatively
by using Pisier's remarkable net characterization of Sidon sets. We also
construct non-trivial examples of Sidon sets with Kronecker constant $2$.

As well, we define a weaker interpolation property than $\epsilon $%
-Kronecker by only requiring the approximation of target functions whose
range lies in the set of $n$'th roots of unity. Sets that satisfy a suitable
quantitative condition for this less demanding interpolation property are
also shown to be Sidon.

\section{Kronecker-like sets that are Sidon}

Let $G$ be a compact abelian group and $\Gamma $ its discrete abelian dual
group. An example of such a group $G$ is the circle group $\mathbb{T}$, the
complex numbers of modulus one, whose discrete dual is the group of
integers, $\mathbb{Z}$.

\begin{definition}
(i) A subset $E\subseteq \Gamma $ is said to be $\epsilon $\textbf{-Kronecker%
} if for every $\phi :E\rightarrow \mathbb{T}$ there exists $x\in G$ such
that 
\begin{equation}
\left\vert \phi (\gamma )-\gamma (x)\right\vert <\epsilon \text{ for all }%
\gamma \in E\text{.}  \label{Kronecker}
\end{equation}%
By the \textbf{Kronecker constant }of $E$, $\kappa (E)$, we mean the infimum
of the constants $\epsilon $ for which (\ref{Kronecker}) is satisfied.

(ii) A subset $E\subseteq \Gamma $ is said to be \textbf{Sidon} if for every
bounded function $\phi :E\rightarrow \mathbb{C}$ there is a measure $\mu $
on $G$ with $\widehat{\mu }(\gamma )=\phi (\gamma )$ for all $\gamma \in E$.
If the interpolating measure $\mu $ can always be chosen to be discrete,
then the set $E$ is said to be\textbf{\ }$I_{0}$.
\end{definition}

Hadamard sets $E=\{n_{j}\}\subseteq \mathbb{N}$ with $\inf n_{j+1}/n_{j}=q>2$
are known to satisfy $\kappa (E)\leq $ $\left\vert 1-e^{i\pi
(q-1)}\right\vert $ and this tends to $0$ as $q$ tends to infinity. More
generally, every infinite subset of a torsion-free dual group $\Gamma $
contains subsets of the same cardinality that are $\epsilon $-Kronecker for
any given $\epsilon >0$. If $\Gamma $ is not torsion free, but the subset $E$
does not contain `too' many elements of order 2, then $E$ will contain a
subset $F$ of the same cardinality, having $\kappa (F)=$ $1$ (see \cite{GH1}%
, \cite{GHColloq}).

Obviously, every $I_{0}$ set is Sidon, but the converse is not true. It is
unknown whether every Sidon set is a finite union of $I_{0}$ sets.

For a set $E$ to be Sidon (or $I_{0}$) it is enough that there be a constant 
$\delta <1$ such that for every $E$-function $\phi $ with $\left\vert \phi
(\gamma )\right\vert \leq 1$ for all $\gamma $, there is a (discrete)
measure $\mu $ such that 
\begin{equation*}
\left\vert \phi (\gamma )-\widehat{\mu }(\gamma )\right\vert <\delta \text{
for all }\gamma \in E\text{.}
\end{equation*}%
Since $\gamma (x)=\widehat{\delta _{x}}(\gamma )$ for $\delta _{x}$ the
point mass measure at $x$, it is easy to see that if $E$ is $\epsilon $%
-Kronecker for some $\epsilon <1$, then $E$ is $I_{0}$. With more work this
can be improved: If $\kappa (E)<\sqrt{2},$ then $E$ is $I_{0}$. This result
is sharp as there are $I_{0}$ sets that are $\sqrt{2}$-Kronecker; see \cite%
{GH1}.

It is well known that Sidon sets satisfy a number of arithmetic properties,
such as not containing large squares or long arithmetic progressions. In 
\cite{GH1} (or see the discussion in \cite[p. 35]{GH13}), it was shown that
sets $E$ with $\kappa (E)<2$ also satisfy these conditions, thus it is
natural to ask if such sets are always Sidon. Here we answer this question
affirmatively.

\begin{theorem}
\label{T:prob4} If the Kronecker constant of $E\subseteq \Gamma $ is less
than two, then $E$ is Sidon.
\end{theorem}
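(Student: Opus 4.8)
The plan is to invoke Pisier's net characterization of Sidonicity, so the first task is to recall its precise form. One convenient version says that $E$ is Sidon if and only if there is a constant $\delta>0$ such that every finite subset $F\subseteq E$ admits a probability measure (or a net of measures) whose Fourier transform is uniformly close to any prescribed sign pattern on $F$; equivalently, Sidonicity is equivalent to the existence of a uniform lower bound on the amount of ``randomized interpolation'' one can achieve on finite subsets. The key point is that Pisier's criterion reduces the global Sidon property to a uniform, quantitative, and crucially \emph{finite} interpolation condition. So I would first isolate the exact finite condition I need to verify.

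The heart of the argument is then to show that the hypothesis $\kappa(E)<2$ supplies exactly the quantitative finite interpolation data that Pisier's criterion demands. Here I would exploit the fact that $\epsilon$-Kronecker interpolation only asks to approximate \emph{unimodular} target functions $\phi:E\to\mathbb{T}$, whereas Sidonicity concerns all bounded functions; the bridge is that for the net characterization it suffices to handle a sufficiently rich family of unimodular (or finitely-valued) targets and then use averaging/convexity to reach arbitrary bounded functions. Writing $\epsilon=\kappa(E)<2$ and choosing $\epsilon'$ with $\epsilon<\epsilon'<2$, the definition gives, for every $\phi:F\to\mathbb{T}$ on a finite $F$, a point $x\in G$ with $|\phi(\gamma)-\gamma(x)|<\epsilon'$ for all $\gamma\in F$. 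Since $\gamma(x)=\widehat{\delta_x}(\gamma)$, each such point mass is a discrete interpolating measure achieving error strictly below $2$. The task is to convert a ``within $\epsilon'<2$'' approximation into the uniform net condition, which is the main technical step.

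The main obstacle, as signalled by the earlier remark that $\delta<1$ (not merely $\delta<2$) is what trivially yields Sidonicity, is that a single point mass only gives error up to $2$, which is too weak to apply the elementary sufficient condition directly. The resolution must be to \emph{average}: I would take expectations over random choices of the target phases $\phi$ and the corresponding interpolating points $x$, so that the averaged transform $\mathbb{E}[\gamma(x)]$ is correlated with the desired values with a gap bounded away from zero that depends only on $\epsilon'<2$ (through a quantity like $\cos(\epsilon'/2)$ or $1-\epsilon'^2/2$), not on the size of $F$. This is precisely where Pisier's formulation is indispensable, because it is designed to accept such averaged, probabilistic interpolation rather than pointwise interpolation by a single measure. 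I expect the delicate part to be verifying that the averaged correlation gap is uniform over all finite $F$ and all sign patterns, and in checking that the constant extracted from $\epsilon'<2$ meets the exact numerical threshold in Pisier's net criterion.

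Finally, once the uniform finite condition is in hand, I would cite Pisier's theorem to conclude that $E$ is Sidon, and remark that the argument does not obviously yield the stronger $I_0$ conclusion (consistent with the earlier statement that $I_0$-ness is only known for $\kappa(E)<\sqrt2$), since the averaging destroys the discreteness and the pointwise-error control that the $I_0$ property requires.
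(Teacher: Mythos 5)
You have misidentified the tool, and the step you flag as ``the main technical step'' is exactly where the argument cannot be completed. Pisier's net characterization is not a statement about randomized interpolation by measures; it is an \emph{entropy} condition: $E$ is Sidon if and only if there exists $\varepsilon>0$ such that for every finite $F\subseteq E$ there is a set $Y\subseteq G$ with $|Y|\geq 2^{\varepsilon |F|}$ and $\sup_{\gamma\in F}|\gamma(x)-\gamma(y)|\geq\varepsilon$ for all distinct $x,y\in Y$. The condition you state instead --- uniform approximate interpolation of sign patterns by (averages of) measures --- is essentially the elementary $\delta<1$ criterion already quoted in the paper, and, as you observe yourself, $\kappa(E)<2$ does not feed into it.

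The averaging you propose to bridge this cannot produce the gap you claim. Quantitatively, $|\phi(\gamma)-\gamma(x)|\leq\varepsilon'$ gives $\mathrm{Re}\,\bigl(\phi(\gamma)\overline{\gamma(x)}\bigr)\geq 1-\varepsilon'^{\,2}/2$, and for $\sqrt2\leq\varepsilon'<2$ this lower bound lies in $(-1,0]$: the point mass is bounded away from perfect \emph{anti}correlation with $\phi$, not positively correlated with it. The zero measure already does better, so no convex combination or expectation over such point masses yields a correlation ``bounded away from zero depending only on $\varepsilon'<2$''; the quantities you name, $\cos(\varepsilon'/2)$ or $1-\varepsilon'^{\,2}/2$, are positive precisely when $\varepsilon'<\sqrt2$, which is why $\sqrt2$ is the threshold for interpolation-based arguments and why the range $\sqrt2\leq\kappa(E)<2$ was the open problem. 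Tricks to cancel the random phase fare no better: taking the point $x_{\theta\phi}-x_{\phi}$, so that $\gamma(x_{\theta\phi})\overline{\gamma(x_{\phi})}\approx\theta(\gamma)$, compounds two angular errors each of size up to $\eta=2\arcsin(\varepsilon'/2)<\pi$, hence a total error that can approach $2\pi$, and nothing forces the two (adversarially chosen) errors to cancel on average. The paper's proof constructs no interpolating measure at all; it \emph{counts}. Take a maximal $\varepsilon$-separated set $S\subseteq G$ for the pseudometric $\sup_{\gamma\in F}|\gamma(\cdot)-\gamma(\cdot\cdot)|$ (finite by compactness of $G$); maximality plus the Kronecker hypothesis force the sets $W(h)=\bigl\{\psi:F\rightarrow\mathbb{T}\,:\,\sup_{\gamma\in F}|\gamma(h)-\psi(\gamma)|\leq\kappa(E)+\varepsilon\bigr\}$, $h\in S$, to cover $\mathbb{T}^{F}$; since $\kappa(E)+\varepsilon<2$, each $W(h)$ lies in a product of arcs of length $2\eta<2\pi$, so comparing volumes gives $|S|\geq(\pi/\eta)^{|F|}=2^{\varepsilon''|F|}$. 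That exponentially large separated set is exactly the datum the net condition demands, and Sidonicity follows. If you want to salvage your outline, this counting step --- not averaging --- is the missing idea.
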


\begin{proof}
We shall use Pisier's $\varepsilon $-net condition which states that a
subset $E$ is Sidon if and only if there is some $\varepsilon >0$ such that
for each finite subset $F\subset E$ there is a set $Y\subset G$ with $%
|Y|\geq 2^{\varepsilon |F|}$ and whenever $x\neq y\in Y$, then 
\begin{equation*}
\varepsilon \leq \sup_{\gamma \in F}\left\vert \gamma (x)-\gamma
(y)\right\vert .
\end{equation*}%
This was proven by Pisier in \cite{Pi}. Proofs can also be found in \cite[%
Thm. 9.2.1]{GH13} and \cite[Thm. V.5]{LQ}.

Since we are assuming $\kappa (E)<2$, we can choose $\varepsilon >0$ such
that $\kappa (E)+\varepsilon <2$. Let $F$ be any finite subset of $E$.

For all $g\in G$ and $\lambda >0$, the sets 
\begin{equation*}
U(g,\lambda )=\left\{ \,{h\in G}\,:\,{\lambda >\sup_{\gamma \in F}|\gamma
(h)-\gamma (g)|}\,\right\}
\end{equation*}%
are among the basic open sets for the topology on $G$ (the topology of
pointwise convergence as functions on $\Gamma $). We claim there is a finite
maximal set $S\subset G$ such that 
\begin{equation*}
x\neq y\in S\quad \Rightarrow \quad \varepsilon \leq \sup_{\gamma \in
F}|\gamma (x)-\gamma (y)|.
\end{equation*}%
This is a consequence of the compactness of $G$. If it was not true, one
could choose an infinite set $S$ having this separation property. As $G$ is
compact, $S$ would have a cluster point $z\in G$. The open set $%
U(z,\varepsilon /2)$ would then contain infinitely many members of $S$,
violating the required separation assumption.

By the maximality of $S$, for each $g\in G$ there is some $h\in S$ such that 
$g\in U(h,\varepsilon )$.

Consider any function $\phi :F\rightarrow \mathbb{T}$. By the Kronecker
property, there is some $g\in G$ such that $\sup_{\gamma \in F}\left\vert
\gamma (g)-\phi (\gamma )\right\vert \leq \kappa (E)$. Since there is some $%
h\in S$ such that $g\in U(h,\epsilon ),$ we have that $\phi \in W(h)$ where 
\begin{equation*}
W(h):=\,\left\{ \psi {:F\rightarrow }\mathbb{T}\,:\,{\sup_{\gamma \in F}}%
\left\vert {\gamma (h)-\psi (\gamma )}\right\vert \leq {\kappa
(E)+\varepsilon <2}\right\} .
\end{equation*}%
Consequently,%
\begin{equation*}
\mathbb{T}^{F}=\bigcup_{h\in S}W(h).
\end{equation*}

We identify $\mathbb{T}^{F}$ with $[0,2\pi )^{F},$ with the group operation
being addition $\func{mod}2\pi ,$ and in this way put $\left\vert
F\right\vert $-dimensional Euclidean volume on $\mathbb{T}^{F}$. With this
identification, 
\begin{equation*}
W(h)\subseteq \tprod\limits_{\gamma \in F}[\gamma (h)-\eta ,\gamma (h)+\eta
],
\end{equation*}%
where $\eta <\pi $ depends only on the number $\kappa (E)+\varepsilon $ (and
not on $h$ or $F$). Thus the $\left\vert F\right\vert $-dimensional volume
of each set $W(h)$ is bounded by $(2\eta )^{|F|}$, while the volume of $%
\mathbb{T}^{F}$ is $(2\pi )^{\left\vert F\right\vert }$. It follows that 
\begin{equation*}
card(S)\geq \left( \dfrac{2\pi }{2\eta }\right) ^{|F|}=2^{\varepsilon
^{\prime }|F|}
\end{equation*}%
for a suitable choice of $\varepsilon ^{\prime }>0$.

The minimum of $\varepsilon $ and $\varepsilon ^{\prime }$ meet the Pisier
net condition and are independent of $F$. Thus $E$ is Sidon.
\end{proof}

\begin{remark}
In number theory, a set $E\subseteq \Gamma $ is sometimes called a Sidon set
if whenever $\gamma _{j}\in E$, then $\gamma _{1}\gamma _{2}=\gamma
_{3}\gamma _{4}$ if and only if $\{\gamma _{3},\gamma _{4}\}$ is a
permutation of $\{\gamma _{1},\gamma _{2}\}$. This is a different class of
sets from the Sidon sets defined above. $\varepsilon $-Kronecker sets need
not be Sidon in this sense; indeed any finite subset $E\subseteq $ $\mathbb{Z%
}$ that does not contain $0$ has $\kappa (E)<2$. However, if $E$ is $%
\varepsilon $-Kronecker for some $\varepsilon <\sqrt{2}$, then there are a
bounded number of pairs with common product, with the bound depending only
on $\varepsilon $ (see \cite{GH1}).
\end{remark}

Next, we alter the definition of the Kronecker constant by only considering
target functions whose range is restricted to a finite subgroup of $\mathbb{T%
}$. This is a natural variation to consider for if $\Gamma $ is a torsion
group the characters of $G$ take on only the values in a suitable finite
subgroup of $\mathbb{T}$. Moreover, there are even subsets $E$ of $\mathbb{Z}
$ (including all subsets of size 2 and many of size 3) whose Kronecker
constant is realized with target functions $\phi $ mapping $E$ into $%
\{-1,+1\}$ (c.f. \cite{HRThree}).

\begin{definition}
Let $\mathbf{T}_{n}$ denote the set of $n$'th roots of unity in $\mathbb{T}$
for $n\geq 2$. Let $\kappa _{n}(E)$ be the infimum of $\epsilon \geq 0$ such
that $E$ is $(\epsilon ,n)$-Kronecker, where $E\subseteq \Gamma $ is $%
(\epsilon ,n)$-Kronecker if for every $\phi :E\rightarrow \mathbf{T}_{n}$
there exists $x\in G$ such that 
\begin{equation*}
\gamma \in E\quad \Rightarrow \quad \left\vert \phi (\gamma )-\gamma
(x)\right\vert <\epsilon .
\end{equation*}
\end{definition}

\begin{theorem}
\label{T:prob4n} Let $E\subset \Gamma $. If $\kappa _{n}(E)<\left\vert
1-e^{i\pi (1-1/n)}\right\vert $, then $E$ is Sidon.
\end{theorem}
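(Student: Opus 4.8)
The plan is to run the architecture of the proof of Theorem~\ref{T:prob4} almost verbatim, with the one change forced by the weaker hypothesis: the restricted Kronecker property only lets us interpolate targets valued in $\mathbf{T}_n$, so instead of covering all of $\mathbb{T}^{F}$ by the sets $W(h)$ I will only cover the finite grid $\mathbf{T}_n^{F}$, and the Euclidean-volume estimate of the earlier proof gets replaced by a counting estimate for roots of unity. Writing $\mu_n:=|1-e^{i\pi(1-1/n)}|$, I first record the trigonometric identity $\mu_n=2\sin\big(\tfrac{\pi(n-1)}{2n}\big)=2\cos(\pi/(2n))$, so that two points of $\mathbb{T}$ lie at chordal distance at least $\mu_n$ \emph{exactly} when their angular separation is at least $\pi(1-1/n)$.

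Since $\kappa_n(E)<\mu_n$, choose $\varepsilon>0$ with $\kappa_n(E)+\varepsilon<\mu_n$. Fix a finite $F\subset E$ and, exactly as before, use the compactness of $G$ to extract a finite maximal $\varepsilon$-separated set $S\subset G$, so that $\sup_{\gamma\in F}|\gamma(x)-\gamma(y)|\ge\varepsilon$ for distinct $x,y\in S$ while every $g\in G$ lies in some $U(h,\varepsilon)$ with $h\in S$. Given $\phi:F\to\mathbf{T}_n$, the $(\epsilon,n)$-Kronecker property (applied to any extension of $\phi$ to $E\to\mathbf{T}_n$, and passing to the infimum as in the previous proof) yields $g\in G$ with $\sup_{\gamma\in F}|\gamma(g)-\phi(\gamma)|\le\kappa_n(E)$; choosing $h\in S$ with $g\in U(h,\varepsilon)$ places $\phi$ into
\[
W(h):=\Big\{\psi:F\to\mathbf{T}_n\ :\ \sup_{\gamma\in F}|\gamma(h)-\psi(\gamma)|\le\kappa_n(E)+\varepsilon\Big\}.
\]
Thus $\mathbf{T}_n^{F}=\bigcup_{h\in S}W(h)$, but now each $W(h)\subset\mathbf{T}_n^{F}$ is a \emph{finite} set.

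The crux, where I expect the real content to lie, is to bound $|W(h)|$, and this is the step that pins down the constant $\mu_n$. Since $W(h)$ is a product over the coordinates $\gamma\in F$, it suffices to show that for each fixed $h$ and $\gamma$ at most $n-1$ of the $n$ points $\zeta\in\mathbf{T}_n$ satisfy $|\zeta-\gamma(h)|\le\kappa_n(E)+\varepsilon$. As $\gamma(h)\in\mathbb{T}$, among the $n$ equally spaced points of $\mathbf{T}_n$ there is one lying within angular distance $\pi/n$ of the antipode $-\gamma(h)$ (the nearest root to any point of $\mathbb{T}$ is within $\pi/n$, half the gap $2\pi/n$ between consecutive roots). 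That root is therefore at angular distance at least $\pi-\pi/n=\pi(1-1/n)$ from $\gamma(h)$, i.e. at chordal distance at least $\mu_n>\kappa_n(E)+\varepsilon$, so it is excluded from $W(h)$. Hence each coordinate admits at most $n-1$ values and $|W(h)|\le (n-1)^{|F|}$. This also exhibits the sharpness of the constant: when $-\gamma(h)$ falls midway between two roots the extremal root sits at chordal distance exactly $\mu_n$, so once the threshold reaches $\mu_n$ nothing need be excluded.

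Comparing cardinalities in $\mathbf{T}_n^{F}=\bigcup_{h\in S}W(h)$ gives $n^{|F|}\le |S|\,(n-1)^{|F|}$, whence
\[
|S|\ \ge\ \Big(\tfrac{n}{n-1}\Big)^{|F|}\ =\ 2^{\varepsilon'|F|},\qquad \varepsilon':=\log_2\tfrac{n}{n-1}>0 .
\]
Both $\varepsilon$ (the separation of $S$) and $\varepsilon'$ (the growth exponent) are positive and independent of $F$, so $\min(\varepsilon,\varepsilon')$ verifies Pisier's net condition and $E$ is Sidon. The only genuinely new ingredient beyond Theorem~\ref{T:prob4} is the root-counting estimate of the third paragraph.
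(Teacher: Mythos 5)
Your proof is correct and follows essentially the same route as the paper's: the same Pisier-net framework with a finite maximal $\varepsilon$-separated set $S$, the covering of $\mathbf{T}_n^{F}$ by the sets $W(h)$ (the paper's $V(h)$), and the same coordinatewise root-counting bound $|W(h)|\le (n-1)^{|F|}$ leading to $\mathrm{card}(S)\ge (n/(n-1))^{|F|}$. The only difference is expository: you spell out the geometric justification that the root of unity nearest the antipode $-\gamma(h)$ lies at chordal distance at least $\left\vert 1-e^{i\pi (1-1/n)}\right\vert$ from $\gamma(h)$, a step the paper asserts without detail.
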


\begin{proof}
Choose $\varepsilon >0$ such that $\kappa _{n}(E)+\varepsilon <\left\vert
1-e^{i\pi (1-1/n)}\right\vert $. Let $F\subset E$ be finite. Choose $%
S\subset G$ as in the proof of Theorem \ref{T:prob4}. Arguing in a similar
fashion to that proof, we again deduce that for every $\phi :E\rightarrow 
\mathbf{T}_{n}$ there is some $h\in S$ such that $\phi \in V(h),$ where 
\begin{equation*}
V(h):=\left\{ \psi {:F\rightarrow \mathbf{T}_{n}}\,:\,{\sup_{\gamma \in F}}%
\left\vert {\gamma (h)-\psi (\gamma )}\right\vert \leq {\kappa
_{n}(E)+\varepsilon }\,\right\} .
\end{equation*}%
Consequently 
\begin{equation*}
(\mathbf{T}_{n})^{F}=\bigcup_{h\in S}V(h).
\end{equation*}%
%
%
%
%
%
%
%
%
%
For each $h\in S$ and every $\gamma \in F,$ there is an $n$'th root of
unity, $\omega _{n}\in $ $\mathbf{T}_{n},$ such that $\left\vert \gamma
(h)-\omega _{n}\right\vert \geq \left\vert 1-e^{i\pi (1-1/n)}\right\vert $.
Defining $\phi _{h}(\gamma )=\omega _{n}$, it follows that $\phi _{h}\notin
V(h)$. Thus each $V(h)$ has at most $(n-1)^{|F|}$ elements. Consequently,
there is some $\varepsilon ^{\prime }>0$, independent of $F$, such that 
\begin{equation*}
card(S)\geq \dfrac{n^{|F|}}{(n-1)^{|F|}}=2^{\epsilon ^{\prime }|F|}.
\end{equation*}%
Again, the minimum of $\varepsilon $ and $\varepsilon ^{\prime }$ meets the
Pisier net condition to be Sidon.
\end{proof}

It is sometimes more convenient to measure angular distances when comparing
elements of $\mathbb{T}$ and to express Kronecker constants in those terms.
Towards this, put $\mathbf{Z}_{n}=\,\left\{ 2\pi j/n:{j=0,1,...,n-1}\right\} 
$ and for $z\in \mathbb{T}$, let $\arg (z)$ be the angle $\theta \in \lbrack
0,2\pi )$ such that $\exp (i\theta )=z$. Let $\alpha _{n}(E)$ be the infimum
of $\epsilon \geq 0$ such that for every $\phi :E\rightarrow \mathbf{Z}_{n}$
there exists $x\in G$ such that 
\begin{equation*}
\gamma \in E\quad \Rightarrow \quad \left\vert \phi (\gamma )-\arg \gamma
(x)\right\vert \leq \epsilon .
\end{equation*}%
A set $E$ satisfying this condition is called weak $(\epsilon ,n)$-angular
Kronecker. Here $|\phi (\gamma )-\arg \gamma (x)|$ should be understood $%
\func{mod}2\pi ,$ so $\alpha _{n}(E)\in \lbrack 0,\pi ].$

It is easy to see that $\kappa _{n}(E)=\left\vert 1-e^{i\alpha
_{n}(E)}\right\vert $, thus the previous theorem can be restated as: $E$ is
Sidon if $\alpha _{n}(E)<\pi (1-1/n)$.

We can similarly define weak angular $\epsilon $-Kronecker sets and the
angular Kronecker constant, $\alpha (E),$ by considering the approximation
problem for functions $\phi :E\rightarrow \lbrack 0,2\pi )$. One can easily
check that $\kappa (E)=\left\vert 1-e^{i\alpha (E)}\right\vert $, hence
Theorem \ref{T:prob4} can be restated as: $E$ is Sidon if $\alpha (E)<\pi $.

\begin{example}
Let $n>1$ be any integer. The set $E=1+n\mathbb{Z}$ is not a Sidon subset of 
$\mathbb{Z}$ being a coset of an infinite subgroup, but $\alpha _{n}(E)=\pi
-\pi /n$. That shows Theorem \ref{T:prob4n} is sharp. In fact, for odd $n$, $%
\alpha _{n}(E)\leq \pi -\pi /n$ for all subsets $E$ of any discrete abelian
group $\Gamma $. This is because the $n$'th root of unity farthest from $1$
is $e^{i\pi (1-1/n)}$, so that if we let $1$ denote the identity element of $%
G$, then for all $\mathbf{T}_{n}$-valued functions $\phi $ and any $\gamma
\in \Gamma $ we have $|\phi (\gamma )-\gamma (1)|\leq \left\vert 1-e^{i\pi
(1-1/n)}\right\vert $.

To see that $\alpha _{n}(1+n\mathbb{Z})\leq $ $\pi -\pi /n$ for $n$ even,
take $g=\exp (\pi i/n)$. For any character $\gamma =1+nk\in E$, we have $%
\arg \gamma (g)=\pi (nk+1)/n$ with $nk+1$ an odd integer. Thus $|z-\arg
\gamma (x)|\leq \pi -\pi /n$ for any $z\in \mathbf{Z}_{n}$.
\end{example}

\section{Some Examples of Sidon Sets with Kronecker Constant Equal to $2$}

Since any subset of $\Gamma $ that contains the identity character $1$ has
Kronecker constant equal to $2$, we are interested in constructing examples
of Sidon subsets $E$ of $\Gamma \diagdown \{1\}$ with $\kappa (E)=2$ and $%
\kappa _{n}(E)\geq \left\vert 1-e^{i\pi (1-1/n)}\right\vert $. We give one
example with a set of elements of finite order and a second example where
all the elements of $E$ have infinite order.

\begin{example}
Let $\Gamma =\mathbb{Z}_{2}\oplus \mathbb{Z}_{2}$ $\oplus \mathbb{Z}_{2}$,
where $\mathbb{Z}_{2}=\{0,1\}$. Then $\Gamma \diagdown (0,0,0)$ is Sidon,
but $\kappa (E)=2$ and $\kappa _{n}(E)\geq \left\vert 1-e^{i\pi
(1-1/n)}\right\vert $ for $n\geq 2$.
\end{example}

\begin{proof}
Being a finite set, $\Gamma \diagdown (0,0,0)$ is Sidon. Let $e_{j}$ be the
standard basis vectors of $\mathbb{Z}_{2}\oplus \mathbb{Z}_{2}$ $\oplus 
\mathbb{Z}_{2}$ and let $E=\{e_{2},e_{3},e_{1}+e_{2},e_{1}+e_{3}\}$.

We will first show that $\kappa (E)=2$. Define $\phi $ by $\phi (e_{2})=\phi
(e_{3})=\phi (e_{1}+e_{2})=1$ and $\phi (e_{1}+e_{3})=-1$. Suppose that $%
g\in G$ and $\epsilon >0$ satisfies 
\begin{equation*}
|\gamma (g)-\phi (g)|<2-\epsilon \quad \text{for all $\gamma \in E.$}
\end{equation*}%
Because $\gamma (g)\in \{-1,+1\}$ for every $\gamma \in \Gamma $, we must
have $e_{2}(g)=e_{3}(g)=1=(e_{1}+e_{2})(g)$ and $(e_{1}+e_{3})(g)=-1$. This
forces $e_{1}(g)$ to be equal to both $-1$ and $1$, a contradiction. Hence $%
\kappa (E)=2$.

Since $\phi $ takes on only $n$'th roots of unity for even $n$, this
argument also proves $\kappa _{n}(E)=2$ when $n$ is even.

If $n$ is odd then, instead, define $\phi (e_{1}+e_{3})=\omega _{n}$, where $%
\omega _{n}=e^{i\pi (1-1/n)}$, an $n$'th roots of unity nearest to $-1$. If $%
\kappa _{n}(E)<\left\vert 1-e^{i\pi (1-1/n)}\right\vert $, then we obtain
the same contradition as before by noting that the identity $\left\vert
1-\phi (e_{1}+e_{3})\right\vert =\left\vert 1-e^{i\pi (1-1/n)}\right\vert $
forces $(e_{1}+e_{3})(g)=-1.$
\end{proof}

\begin{example}
Let $\Gamma =\mathbb{Z}$ $\oplus \Gamma _{2}$ where $\Gamma _{2}$ is the
countable direct sum of copies of $\mathbb{Z}_{2}$. Let $e_{n}$ be the
character $e^{2\pi in(\cdot )}$ on $\mathbb{T}$ and let $\gamma _{n}$ be the
projection onto the $n$'th-$\mathbb{Z}_{2}$ factor, both viewed as elements
of $\Gamma $ in the canonical way. Set 
\begin{equation*}
E=\{(e_{n},\gamma _{n})\}_{n=1}^{\infty }\bigcup \{(e_{n}^{-1},\gamma
_{n})\}_{n=1}^{\infty }.
\end{equation*}%
Then $E$ is Sidon, but $\kappa (E)=2$ and $\kappa _{n}(E)\geq \left\vert
1-e^{i\pi (1-1/n)}\right\vert $ for $n\geq 2$.
\end{example}

\begin{proof}
We argue first that $E_{1}=\{(e_{n},\gamma _{n})\}_{n=1}^{\infty }$ and $%
E_{2}=\{(e_{n}^{-1},\gamma _{n})\}_{n=1}^{\infty }$ both satisfy algebraic
conditions to be Sidon. Let $f:E_{1}\rightarrow \{-1,0,1\}$ be finitely
non-zero and satisfy By the algebraic independence of the factors of $\Gamma 
$ this implies $\gamma _{n}^{f(n)}=1$ for all $n$ and hence $f(n)=0$.
Therefore $E_{1}$ is quasi-independent and such sets are well known to be
Sidon. Likewise, $E_{2}$ is Sidon and hence the union, $E=E_{1}\cup E_{2},$
is Sidon.

Let $\epsilon >0$ and suppose $E$ is $(2-\epsilon )$-Kronecker. Define $\phi 
$ to be $-1$ on $E_{1}$ and $1$ on $E_{2}$. The compact group $G=\mathbb{%
T\otimes }G_{2}$ where $G_{2}$ is the direct product of countably many
copies of (the multiplicative group) $\mathbb{Z}_{2}$ is the dual of $\Gamma 
$. Choose $g\in G$ such that for all $\gamma \in E$, 
\begin{equation*}
\left\vert \phi (\gamma )-\gamma (g)\right\vert <2-\epsilon .
\end{equation*}

Write $g=(u,(g_{n}))$ where $u\in \mathbb{T}$ and $g_{n}$ is the projection
of $g$ onto the $n$'th-$\mathbb{Z}_{2}$ factor. With this notation, $%
(e_{n}^{\pm 1},\gamma _{n})(g)=e^{\pm 2\pi inu}g_{n}$, hence for all $n$, 
\begin{eqnarray*}
\left\vert -e^{-2\pi inu}-g_{n}\right\vert &=&|-1-e^{2\pi
inu}g_{n}|<2-\epsilon \quad \text{and}\quad \\
\left\vert e^{2\pi inu}-g_{n}\right\vert &=&|1-e^{-2\pi
inu}g_{n}|<2-\epsilon \text{.}
\end{eqnarray*}

If $u$ is rational, then $e^{2\pi inu}=e^{-2\pi inu}=1$ periodically as a
function of $n$. For these infinitely many $n$, we have $|-1-g_{n}|<2-%
\epsilon $ and $|1-g_{n}|<2-\epsilon $. But $g_{n}=\pm 1$, so this is
impossible.

Otherwise, $\{e^{2\pi inu}\}_{n=1}^{\infty }$ is dense in $\mathbb{T}$.
Choose $n$ such that 
\begin{equation*}
\left\vert 1-e^{2\pi inu}\right\vert =\left\vert 1-e^{-2\pi inu}\right\vert
<\epsilon /2.
\end{equation*}
But then $|-1-g_{n}|<2-\epsilon /2$ and $|1-g_{n}|<2-\epsilon /2$, and again
these cannot be simultaneously satisfied for $g_{n}=\pm 1$. This
impossibility proves $\kappa (E)=2$ and also establishes $\kappa _{n}(E)=2$
for $n$ even.

If, instead, we define $\phi =\omega _{n}$ on $E_{1}$, where $\omega _{n}$
is an $n$'th root of unity nearest $-1$, then similar arguments show that $%
\kappa _{n}(E)=\left\vert 1-e^{i\pi (1-1/n)}\right\vert $ for $n$ odd and $%
\kappa _{n}(E)=2$ for $n$ even.
\end{proof}

\begin{remark}
It would be interesting to know whether non-trivial examples of $2$%
-Kronecker, Sidon sets could be found in a torsion-free group and also
whether every Sidon set is a finite union of sets that are $\epsilon $%
-Kronecker for some $\epsilon <2$.
\end{remark}


\begin{thebibliography}{99}
\bibitem{GH} J. Galindo and S. Hernandez, \textit{The concept of boundedness
and the Bohr compactification of a MAP abelian group}, Fund. Math. \textbf{15%
}(1999), 195-218.

\bibitem{GK} B.N. Givens and K. Kunen, \textit{Chromatic numbers and Bohr
topologies}, Top. Appl. \textbf{131}(2003), 189-202.

\bibitem{GH1} C.C. Graham and K.E. Hare, $\varepsilon $\textit{-Kronecker
and }$I_{0}$\textit{\ sets in abelian groups, I: arithmetic properties of }$%
\varepsilon $\textit{-Kronecker sets,} Math. Proc. Camb. Philo. Soc. \textbf{%
140}(2006), 475-489.

\bibitem{GHColloq} C.C. Graham and K.E. Hare, \textit{Existence of large }$%
\varepsilon $\textit{-Kronecker and }$FZI_{0}$\textit{\ sets in discrete
abelian groups}, Colloq. Math. \textbf{127}(2012), 1-15.

\bibitem{GH13} C.C. Graham and K.E. Hare, \textit{Interpolation and Sidon
Sets for Compact Groups}, Springer-Verlag, New York, 2013.

\bibitem{GL} C.C. Graham and A. T-M. Lau, \textit{Relative weak compactness
of orbits in Banach spaces associated with locally compact groups}, Trans.
Amer. Math. Soc. \textbf{359}(2007), 1129-1160.

\bibitem{HRThree} K.E. Hare and L.T. Ramsey, \textit{Kronecker constants of
three element sets, }to appear Acta. Math. Hung., 2015.

\bibitem{HewR} E. Hewitt and K.A. Ross, \textit{Abstract harmonic analysis,
volume II}, Springer-Verlag, New York, 1970.

\bibitem{K} J-P. Kahane, \textit{Algebres tensorielles et analyse harmonique}%
, Seminaire Bourbaki, 1964-6, Exposes 277-312, Soc. Math. France, Paris,
1995, 221-230.

\bibitem{KR} K. Kunen and W. Rudin,\textit{\ Lacunarity and the Bohr topology%
}, Math. Proc. Camb. Phil. Soc. \textbf{126}(1999), 117-137.

\bibitem{LQ} D. Li and H. Queffelec, \textit{Introduction a l'etude des
espaces de Banach analyse et probabilites}, Soc. Math. de France, Paris,
2004.

\bibitem{LR} J. Lopez and K.A. Ross,\textit{\ Sidon sets}, Lecture notes in
Pure and Appl. Math. \textbf{13}, Marcel Dekker, New York, N.Y. 1975.

\bibitem{Pi} G. Pisier, \textit{Conditions d'entropie et caracterisations
arithmetique des ensembles de Sidon}. In Proc. Conf. on modern topics in
harmonic analysis, Inst. de Alta Math, Torino, 1982, 911-941.

\bibitem{Va} N. Varapoulos, \textit{Tensor algebras and harmonic analysis},
Acta Math. \textbf{119}(1968), 51-112.
\end{thebibliography}
\end{document}